\theoremstyle{definition}
\newtheorem{thm}{Theorem}
\newtheorem{lem}[thm]{Lemma}
\newtheorem{defn}[thm]{Definition}
\newcommand{\set}[1]{\{#1\}}
\newcommand{\powerset}{\mathcal{P}}
\newcommand{\axiom}[1]{\bar{#1}}
\newcommand{\inference}[2]{\frac{#1}{#2}}
\newcommand{\poscite}[2][]{\citeauthor{#2}'s (\citeyear[#1]{#2})}
\numberwithin{equation}{section}
\numberwithin{enumi}{section}
\numberwithin{thm}{section}
\numberwithin{figure}{section}
\newcommand{\boldchange}[1]
{#1}
\tikzset{
modal/.style={shorten >=1pt,shorten <=1pt,auto,
node distance=.5cm,semithick},
world/.style={circle,draw,minimum size=.7cm,fill=gray!15},
point/.style={circle,draw,fill=black,inner sep=0.5mm},
reflexive/.style={->,in=120,out=60,loop,looseness=#1},
reflexive/.default={5},
reflexive point/.style={->,in=135,out=45,loop,looseness=#1},
reflexive point/.default={25},
}
\tikzset{cross/.style={cross out, draw, 
         minimum size=2*(#1-\pgflinewidth), 
         inner sep=0pt, outer sep=0pt}}
\title{Following all the rules: Intuitionistic completeness for generalised proof-theoretic validity}
\author{Will Stafford and Victor Nascimento
\thanks{Dr Stafford would like to thank Sean Walsh the audiences at Orange County and Inland Empire History and Philosophy of Mathematics and Logic and MCMP Colloquium in Mathematical Philosophy for their helpful comments. He was supported by a Lumina quaeruntur fellowship, LQ300092101, from the Czech Academy of Sciences. Mr Nascimento would like to thank Luiz Carlos Pereira. His work was financed in part by the Coordenaç\~ao de Aperfeiçoamento de Pessoal de N\'ivel Superior – Brasil (CAPES) – Finance Code 001. Both authors would like to thank the two anonymous reviewers.}}
\date{\today}
\begin{document}
\maketitle

\begin{abstract}
    Prawitz conjectured that the proof-theoretically valid logic is intuitionistic logic.  Recent work on proof-theoretic validity has disproven this.  In fact, it has been shown that proof-theoretic validity is not even closed under substitution. In this paper, we make a minor modification to the definition of proof-theoretic validity found in \textcite{Prawitz1973-hc} and refined by \textcite{Schroeder-Heister2006-rh}. We will call the new notion generalised proof-theoretic validity and show that the logic of generalised proof-theoretic validity is intuitionistic logic. 
\end{abstract}

\section{Introduction}
Prawitz developed proof-theoretic validity in the early 1970s as a method of demonstrating that intuitionistic elimination rules follow from intuitionistic introduction rules. Prawitz conjectured not only that intuitionistic elimination rules follow according to proof-theoretic validity but also that no stronger elimination rules follow from then. This amounts to conjecturing that intuitionistic logic is sound and complete when proof-theoretic validity is treated as a semantics. 

This conjecture was firmly refuted by \textcite{Piecha2019}, who demonstrated that most varieties of proof-theoretic validity are actually stronger than intuitionistic logic. This includes Prawitz's 1970s notion and his later \citeyear{Prawitz2006-dr} proposal. The goal of this paper is to show that \boldchange{propositional} intuitionistic logic is sound and complete for generalised proof-theoretic validity, which results from a small modification of the 1970s definition:
\begin{thm}
\boldchange{For all $\varphi$ in the language of propositional logic,} $\varphi$ is a generalised proof-theoretically valid formula $\Leftrightarrow$ $\varphi$ is an intuitionistic validity.
\end{thm}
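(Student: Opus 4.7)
The plan is to prove the biconditional by two separate arguments: soundness ($\Leftarrow$) and completeness ($\Rightarrow$). Soundness should follow the familiar route from Prawitz, since generalised proof-theoretic validity is introduced as only a minor modification of the \textcite{Prawitz1973-hc}/\textcite{Schroeder-Heister2006-rh} notion, and those notions are already known to validate intuitionistic logic. Completeness will be the substantive direction, because existing work (e.g.\ \textcite{Piecha2019}) shows that the unmodified notion validates schemes strictly beyond $\mathbf{IPC}$; so the whole point is to verify that the modification removes precisely those extra validities and nothing more.

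For soundness I would proceed by induction on the length of an $\mathbf{IPC}$ derivation in natural deduction. The base case handles assumptions, and for the inductive step one verifies closure under each introduction and elimination rule. The introduction rules should be essentially immediate from the definition of canonical validity for each connective. The elimination rules should follow from the usual reduction-based argument: a closed canonical proof of the major premise reduces to a canonical proof of the relevant subformula, and one then appeals to the induction hypothesis. I would also need to check that the minor tweak in the definition does not break any of these cases — presumably the tweak only weakens the clause for open arguments or for the behaviour of atomic bases, so introduction and elimination cases should go through essentially unchanged.

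For completeness I would argue contrapositively: if $\varphi$ is not intuitionistically valid, exhibit a witness showing it is not generalised proof-theoretically valid. The natural strategy is to take a finite Kripke countermodel $\mathcal{K}$ for $\varphi$ (which exists by the completeness and finite model property of $\mathbf{IPC}$) and convert it into an atomic base $\mathcal{B}_{\mathcal{K}}$ over which $\varphi$ fails to be generalised valid. Concretely, for each world $w$ of $\mathcal{K}$ one introduces atomic sentences or rules encoding "$p$ holds at $w$", with atomic rules mirroring the accessibility relation and the persistence of atomic forcing. One then proves a truth-lemma style correspondence: for every subformula $\psi$ of $\varphi$ and every world $w$, $\psi$ is valid over $\mathcal{B}_{\mathcal{K}}$ extended to reflect $w$ iff $w \Vdash \psi$ in $\mathcal{K}$. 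Applied to $\varphi$ at the refuting root world, this yields the non-validity of $\varphi$.

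The main obstacle, I expect, is exactly this truth-lemma correspondence, and specifically the disjunction and implication clauses. Under the original Prawitz/Schroeder-Heister definition the disjunction and implication clauses are well-known to force extra principles (e.g.\ Harrop's rule, or generalised disjunction principles) that spoil completeness for $\mathbf{IPC}$; the simulation of a Kripke model cannot be made faithful because the atomic-base semantics is "too strong" on those connectives. So the hard part is to show that the modification introduced in this paper is exactly enough to tame the implication/disjunction clause — likely by quantifying over extensions of the atomic base in a more permissive way — while still being strict enough to validate every $\mathbf{IPC}$ inference checked in the soundness half. Demonstrating that both sides of this balance hold simultaneously, and that the translation of Kripke worlds into atomic bases respects the new clause, will be where the real work lies.
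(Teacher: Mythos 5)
Your overall decomposition is the paper's: the substantive direction is proved contrapositively via the finite model property for $\mathbf{IPC}$ together with a truth lemma transferring a finite Kripke countermodel into the proof-theoretic semantics, which is exactly Theorem~\ref{thm:kripketopts}. The only divergence is in the soundness half: you propose the Prawitz-style induction on natural-deduction derivations with the reduction argument for eliminations, whereas the paper simply cites the fact that every intuitionistic proof-theoretic system is itself equivalent to an intuitionistic Kripke model (\cite{Piecha2016-gt}), so that validity on all intuitionistic Kripke models immediately yields validity on all intuitionistic proof-theoretic systems. Your route would also work, but it is more labour for no gain here, since the clauses of Definition~\ref{def:piecha_logic} are already Kripke-style forcing clauses rather than clauses about canonical proofs and reductions.

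The one idea your plan leaves open --- and it is the load-bearing one --- is how to make the world-to-base translation injective and order-faithful. Two points. First, the countermodel is not a single atomic base $\mathcal{B}_{\mathcal{K}}$ together with its extensions: it is the proof-theoretic system $\mathfrak{S}_\mathcal{M}=\set{S_w\mid w\in W}$, one atomic system per world, and the implication clause quantifies only over supersets \emph{belonging to} $\mathfrak{S}_\mathcal{M}$. That restriction (not a more permissive quantification over extensions) is what removes the extra validities of the unmodified notion: one chooses exactly which extensions exist. Second, the naive choice $S_w=\set{\axiom{p}\mid p\in V(w)}$ collapses distinct worlds with the same valuation and misrepresents the order. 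The paper's fix is to label each world with a fresh atom $p_w$ and to put the tautologous rule $\inference{p_w}{p_w}$ into $S_{w'}$ exactly when $Rww'$; since this rule proves nothing new, it does not disturb the atomic case of the truth lemma, but it makes $w\mapsto S_w$ injective and makes $\subseteq$ on $\set{S_w\mid w\in W}$ coincide with $R$ (Lemmas~\ref{lem:findS} and~\ref{lem:findw}). With that device in place the disjunction and implication cases of your truth lemma become a routine bisimulation argument, which is precisely why the ``real work'' you anticipate dissolves; without it, the correspondence fails already for models with two order-related worlds carrying the same valuation.
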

\noindent This result generalises \poscite{Goldfarb2016-tk} revision of proof-theoretic validity, leading to a notion which, unlike Goldfarb’s, is closed under uniform substitution. 

The insight here is as follows: \textcite{Piecha2019} propose that a set of inference rules for atomic propositions is the proof-theoretic equivalent of a model, which is why validity should be defined over a collection of such sets. We propose that a set of inference rules is equivalent to a world in a model, which is why validity should be defined over a collection of sets of sets. If this is done, the resulting logic is intuitionistic. 

This result in a sense vitiates Prawitz's conjecture. The early treatment of atomic propositions failed to yield a system closed under substitution, which suggests that there is some technical issue with the implementation of the proposal. We take generalised proof-theoretic validity to be the natural approach to ensuring closure under substitution. And this leads to a notion for which intuitionistic logic is sound and complete.

The first section of this paper introduces proof-theoretic validity, the second recalls intuitionistic Kripke models, and the proof is given in the third and final section.
\section{Generalised proof-theoretic validity}
In this section, we lay down the definition of proof-theoretic validity that will be used. We will use the definition of the proof-theoretic consequence relation found in \textcite{Piecha2015} rather than working directly with the definition on proof-like structures. Let us start with the treatment of atomic propositions. Note that we treat $\bot$ as an atomic proposition, not as a 0-ary connective.

\begin{defn}
An atomic rule is either an axiom of the form $\begin{prooftree}\hypo{}\infer1{p}\end{prooftree}$ for any $p\in ATOM\cup\set{\bot}$ or an inference of the form $\begin{prooftree}\hypo{p_1\dots p_n}\infer1{p_{n+1}}\end{prooftree}$ for any $p_1,\dots,p_{n+1}\in ATOM\cup\set{\bot}$. The set of all atomic rules will be denoted as $\mathbb{S}$.
\end{defn}
Let an \emph{atomic system} $S$ be a subset of $\mathbb{S}$. A \emph{proof-theoretic system} is then any $\mathfrak{S}\subseteq\powerset(\mathbb{S})$. The consequence relation between atomic systems and atomic propositions can then be defined as follows:
\begin{defn}
Given an atomic system $S$ and an atomic proposition $p$, we will write $S\vdash p$ if there is a proof of $p$ using only rules in $S$.
\end{defn}
With all this in place, we can define proof-theoretic validity:

\begin{defn}\label{def:piecha_logic}
The \emph{proof-theoretic validity consequence relation} $\vDash$ is such that for every $\mathfrak{S}$ and $S\in\mathfrak{S}$:%\footnote{In \cite{Piecha2019}, this is written $\vDash_S$ with $\mathfrak{S}$ implicit, while in \cite{Stafford2021}, it is written $\vDash^\mathfrak{S}_S$ and called a PCS semantics. Because we do not consider here validity from assumptions, one may avoid the subscripting and emphasise the parallel with Kripke models by writing $\mathfrak{S},S\vDash$.}
\begin{align}
    \mathfrak{S},S\vDash p&\Longleftrightarrow S\vdash p,\\
    \mathfrak{S},S\vDash \bot&\Longleftrightarrow S\vdash\bot,\\
    \mathfrak{S},S\vDash\varphi\wedge\psi&\Longleftrightarrow \mathfrak{S},S\vDash\varphi\text{ and }\mathfrak{S},S\vDash\psi,\\
    \mathfrak{S},S\vDash\varphi\vee\psi&\Longleftrightarrow \mathfrak{S},S\vDash\varphi\text{ or }\mathfrak{S},S\vDash\psi, \\
    \mathfrak{S},S\vDash\psi\rightarrow \varphi&\Longleftrightarrow[\forall S'\in\mathfrak{S}(S'\supseteq S\text{ and }\mathfrak{S},S'\vDash\psi\Rightarrow\mathfrak{S},S'\vDash\varphi)]. 
\end{align}
\noindent Further, let $\mathfrak{S}\vDash\varphi$ hold if and only if $\mathfrak{S},S\vDash\varphi$ for all $S\in\mathfrak{S}$.
\end{defn}

\noindent Most presentations of proof-theoretic validity suppress any reference to the proof-theoretic system. The importance of explicitly stating the proof-theoretic system $\mathfrak{S}$ is recognised by \textcite{Piecha2015}, who highlight the differences between restrictions put on permitted sets of atomic rules in the literature and show how these differences affect what is valid. \textcite{Piecha2019} posit that proof-theoretic systems are analogous to the collection of models relative to which  model-theoretic consequence relations are defined. Given this understanding, their result that every proof-theoretic system is super-intuitionistic can be interpreted as showing that there are no treatments of the atomic propositions that are intuitionistic, and therefore proof-theoretic validity is not intuitionistic either\footnote{\boldchange{\textcite{Piecha2019} are careful to point out that there are other ways to define the atomic formulas which avoid their result such as \textcite{Goldfarb2016-tk}. We discuss this connection at the end of Section~\ref{sec:kripke2pts}.}}. As mentioned above, we are guided by the idea that a proof-theoretic system is analogous to a model, not to a collection of models. This allows us to view \poscite{Piecha2019} result as demonstrating instead that no individual ``model'' is intuitionistic, which is no odder than pointing out that every classical model either models $p$ or $\neg p$, but neither is a classical validity.

The largest proof-theoretic system is $\powerset(\mathbb{S})$. This proof-theoretic system is ``minimal" in the sense that $\bot$ is not defined and intuitionistic logic is therefore not sound. \boldchange{If an atomic system contains a rule $\inference{\bot}{p}$ for every atomic proposition $p$, then $\bot$ will behave as though defined by its elimination rule.}  We will call a proof-theoretic system $\mathfrak{S}\subseteq\powerset(\mathbb{S})$ intuitionistic if every $S\in\mathfrak{S}$ contains $\inference{\bot}{p}$ for every $p$. It is known that intuitionistic logic is sound on the resulting systems.

We can now define generalised proof-theoretic validity as follows:
\begin{defn}
$\varphi$ is a \emph{generalised proof-theoretically valid formula} if for every intuitionistic proof-theoretic system $\mathfrak{S}$, it follows that $\mathfrak{S}\vDash\varphi$. (That is, $GPTV=\set{\varphi\mid\forall\mathfrak{S}\subseteq\powerset(\mathbb{S})[\mathfrak{S}$ intuitionistic $\rightarrow\mathfrak{S}\vDash\varphi]}$.)
\end{defn}
This definition differs from those considered by \textcite{Piecha2019} because we have not chosen a particular proof-theoretic system to define proof-theoretic validity over. Our goal is to show that $INT=GPTV$, i.e., the set of intuitionistic validates, coincides with generalised proof-theoretically valid formulas. 

\section{Kripke models}
In this section, we lay out the definition of an intuitionistic Kripke model. It is already known that every intuitionistic proof-theoretic system is equivalent to a Kripke model (\cite{Piecha2016-gt}). 

\boldchange{Recall that, in this context, a partial order is a relation $R$ that is transitive, antisymmetric, and reflexive. Moreover, a function $f$ on a partial order is monotonic with respect to the subset relation if $R(a,b)$ implies $f(a)\subseteq f(b)$. This can be understood as a condition preventing one from ``changing ones mind" when transitioning from a world $a$ to an accessible world $b$, since everything assigned by the function to $a$ will also be assigned to $b$.}
\begin{defn}
An \emph{intuitionistic Kripke model} $\mathcal{M}=\langle\langle W,R\rangle, V\rangle$ is a Kripke frame $\langle W,R\rangle$ consisting of a set $W$ of worlds and an accessibility relation $R\subseteq W\times W$ that is a partial order, plus a monotonic valuation function $V:W\rightarrow ATOM$.
\end{defn}

\begin{defn}\label{def:kripke_logic}
Define $\Vdash$ on pairs consisting of an intuitionistic Kripke model $\mathcal{M}=\langle\langle W,R\rangle, V\rangle$ and a world $w\in W$:
\begin{align}
    \mathcal{M},w\Vdash p&\Longleftrightarrow p\in V(w),\\
    \mathcal{M},w\nVdash \bot&,\\
    \mathcal{M},w\Vdash\varphi\wedge\psi&\Longleftrightarrow \mathcal{M},w\Vdash\varphi\text{ and }\mathcal{M},w\Vdash\psi,\\
    \mathcal{M},w\Vdash\varphi\vee\psi&\Longleftrightarrow \mathcal{M},w\Vdash\varphi\text{ or }\mathcal{M},w\Vdash\psi, \\
    \mathcal{M},w\Vdash\psi\rightarrow \varphi&\Longleftrightarrow[\forall w'\in W(Rww'\text{ and }\mathcal{M},w'\Vdash\psi\Rightarrow\mathcal{M},w'\Vdash\varphi)]. 
\end{align}
\end{defn}

%There is a clear similarity between the conditions on Kripke models and those for proof-theoretic validity. This resemblance takes on a more concrete form when we notice that every proof-theoretic system with a $\bot$ defined can be converted into an intuitionistic Kripke model. 

\section{From finite Kripke models to proof-theoretic systems}\label{sec:kripke2pts}
We will demonstrate that every finite Kripke model is equivalent to an intuitionistic proof-theoretic system. 
\begin{defn}
An intuitionistic Kripke model is finite if $W$ is finite.
\end{defn}
\noindent We must restrict the size of Kripke models because while a Kripke model can be arbitrarily large, a proof-theoretic system is bounded by $|\powerset(\mathbb{S})|$ which – given that we will, in general, have a countable infinity of atomic propositions – will be the cardinality of the reals. This problem cannot be solved by adding more atomic propositions because we would need a proper class of atomic propositions to ensure that there is a model of every cardinality, which is something we take to be unreasonable.

Showing that every finite Kripke model is equivalent to an intuitionistic proof-theoretic system will suffice for demonstrating that intuitionistic logic is complete for generalised proof-theoretic validity because of the following result:
\begin{thm}[{\cite[Theorem 6.12]{Troelstra1988-mb}}]\label{thm:finitecomplete}
Intuitionistic logic is complete for the class of all finite intuitionistic Kripke models.
\end{thm}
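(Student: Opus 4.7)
The plan is to prove soundness and (finite) completeness separately, with most of the work going into the completeness direction.

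For soundness, I would proceed by induction on the length of derivations in a standard intuitionistic calculus (e.g.\ natural deduction). One first verifies, as an auxiliary lemma, that monotonicity of $V$ extends to all formulas: if $Rww'$ and $\mathcal{M},w\Vdash\varphi$, then $\mathcal{M},w'\Vdash\varphi$. The cases are routine, using transitivity and reflexivity of $R$ for the implication clause of Definition~\ref{def:kripke_logic}. Each introduction and elimination rule is then shown to preserve validity at each world of each finite Kripke model. The only mildly delicate rule is $\rightarrow$-introduction, which relies on the general monotonicity lemma together with reflexivity of $R$.

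For completeness, the strategy is a \emph{finite canonical model} construction tailored to a given unprovable formula $\varphi$. Let $\Sigma$ be the (finite) set of subformulas of $\varphi$. A set $\Gamma\subseteq\Sigma$ is $\Sigma$-\emph{saturated} if (i) $\Gamma$ is consistent, (ii) closed under intuitionistic consequence within $\Sigma$, i.e.\ whenever $\Gamma\vdash_{INT}\psi$ and $\psi\in\Sigma$ we have $\psi\in\Gamma$, and (iii) disjunction-prime within $\Sigma$: if $\psi\vee\chi\in\Gamma$ then $\psi\in\Gamma$ or $\chi\in\Gamma$. I would take $W$ to be the collection of $\Sigma$-saturated sets, $R$ to be set inclusion (automatically a partial order), and $V(\Gamma)=\Gamma\cap ATOM$. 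Monotonicity of $V$ is immediate, and $W$ is finite because $|W|\leq 2^{|\Sigma|}$.

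The core step is the \emph{truth lemma}: for every $\psi\in\Sigma$ and every $\Gamma\in W$, $\mathcal{M},\Gamma\Vdash\psi\Leftrightarrow\psi\in\Gamma$. Atoms, conjunction, and disjunction follow from the closure and primeness conditions. The implication case is where the main difficulty lies, and it requires an \emph{extension lemma}: if $\Gamma\cup\{\psi\}\nvdash_{INT}\chi$ for some $\psi,\chi\in\Sigma$, then $\Gamma\cup\{\psi\}$ extends to a $\Sigma$-saturated set $\Gamma'$ with $\chi\notin\Gamma'$. This is proved by enumerating the finitely many disjunctions in $\Sigma$ and, at each step, choosing whichever disjunct preserves the non-derivability of $\chi$; a standard argument shows at least one such choice is always available. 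With the extension lemma, the right-to-left direction of the implication case is monotonicity of inclusion, and the left-to-right direction proceeds by contraposition: if $\psi\rightarrow\chi\notin\Gamma$ then $\Gamma\cup\{\psi\}\nvdash_{INT}\chi$, so apply the extension lemma.

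Once the truth lemma is in place, completeness follows quickly: since $\nvdash_{INT}\varphi$, the set $\emptyset$ admits a $\Sigma$-saturated extension $\Gamma$ with $\varphi\notin\Gamma$, whence $\mathcal{M},\Gamma\nVdash\varphi$ in a finite model. The principal obstacle throughout is engineering the extension lemma inside the finite set $\Sigma$ rather than via a full Lindenbaum construction — the trick is to work with primeness restricted to $\Sigma$-disjunctions, which keeps the construction finite while still yielding the required counterexample worlds for implication.
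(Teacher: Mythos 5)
The paper does not prove this theorem at all---it imports it verbatim from Troelstra and van Dalen (Theorem 6.12)---and your finite canonical model construction via $\Sigma$-saturated sets is essentially the standard argument given in that reference, so your proposal is correct and matches the intended proof. The only detail worth flagging is that the disjunct-selection step in your extension lemma must be iterated to a fixed point rather than performed in a single pass (adding a disjunct can make further disjunctions of $\Sigma$ newly derivable), which terminates because $\Sigma$ is finite.
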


\boldchange{Now that we are considering only finite intuitionistic Kripke models, it might seem natural to simply try and reverse the obvious method of generating intuitionistic Kripke models from Kripke-like proof-theoretic systems by letting $S_w=\set{\axiom{p}\mid p\in V(w)}$. In fact, this actually works in some cases:}

\begin{center}
    \begin{tikzpicture}[modal]
        \node (c) [] {};
        \node (w) [label=left:{$w$},below=of c] {$p$};
        \node (v) [label=left:{$v$},left=of c] {$p,q$};
        \node (u) [label=left:{$u$},right=of c] {$p,r$};
        \node (t) [label=left:{$t$},above=of c] {$p,q,r$};
        \path[->] (w) edge (v);
        \path[->] (w) edge (u);
        \path[->] (u) edge (t);
        \path[->] (v) edge (t);
    \end{tikzpicture}    $\qquad\Rightarrow\qquad$
    \begin{tikzpicture}[modal]
        \node (c) [] {};
        \node (w) [below=of c] {$\set{\axiom{p}}$};
        \node (v) [left=of c] {$\set{\axiom{p},\axiom{q}}$};
        \node (u) [right=of c] {$\set{\axiom{p},\axiom{r}}$};
        \node (t) [above=of c] {$\set{\axiom{p},\axiom{q},\axiom{r}}$};
        \path[->] (w) edge (v);
        \path[->] (w) edge (u);
        \path[->] (u) edge (t);
        \path[->] (v) edge (t);
    \end{tikzpicture}
\end{center}

\boldchange{However, in many cases this will collapse distinct worlds into the same atomic rules set:}

\begin{center}
    \begin{tikzpicture}[modal]
        \node (c) [] {};
        \node (w) [label=left:{$w$},below=of c] {$p$};
        \node (v) [label=left:{$v$},left=of c] {$p,q$};
        \node (u) [label=left:{$u$},right=of c] {$p,q,r$};
        \node (t) [label=left:{$t$},above=of c] {$p,q,r$};
        \path[->] (w) edge (v);
        \path[->] (w) edge (u);
        \path[->] (u) edge (t);
        \path[->] (v) edge (t);
    \end{tikzpicture}    $\qquad\Rightarrow\qquad$
    \begin{tikzpicture}[modal]
        \node (c) [] {$\set{\axiom{p},\axiom{q}}$};
        \node (w) [below=of c] {$\set{\axiom{p}}$};
        \node (t) [above=of c] {$\set{\axiom{p},\axiom{q},\axiom{r}}$};
        \path[->] (w) edge (c);
        \path[->] (c) edge (t);
    \end{tikzpicture}
\end{center}

\boldchange{There is a trick we can pull to resolve this issue.  It involves noting that atomic rules that are not axioms play two distinct roles.  The first is to allow derivations from axioms. The second is to provide structure to the atomic rule sets.  A proof-theoretic system might have two sets that prove the same atomic formulas, say $\set{\axiom{p}}$ and $\set{\axiom{p},\inference{r}{s}}$, but are distinct (and in fact stand in the particular relations they do with regards to the subset relation) because of the atomic inference rules. This gives us a quick but unsystematic fix to the problem above:}
\begin{center}
    \begin{tikzpicture}[modal]
        \node (c) [] {};
        \node (w) [label=left:{$w$},below=of c] {$p$};
        \node (v) [label=left:{$v$},left=of c] {$p,q$};
        \node (u) [label=left:{$u$},right=of c] {$p,q,r$};
        \node (t) [label=left:{$t$},above=of c] {$p,q,r$};
        \path[->] (w) edge (v);
        \path[->] (w) edge (u);
        \path[->] (u) edge (t);
        \path[->] (v) edge (t);
    \end{tikzpicture}    $\qquad\Rightarrow\qquad$
    \begin{tikzpicture}[modal]
        \node (c) [] {};
        \node (w) [below=of c] {$\set{\axiom{p}}$};
        \node (v) [left=of c] {$\set{\axiom{p},\axiom{q},\inference{s}{s}}$};
        \node (u) [right=of c] {$\set{\axiom{p},\axiom{q},\axiom{r}}$};
        \node (t) [above=of c] {$\set{\axiom{p},\axiom{q},\axiom{r},\inference{s}{s}}$};
        \path[->] (w) edge (v);
        \path[->] (w) edge (u);
        \path[->] (u) edge (t);
        \path[->] (v) edge (t);
    \end{tikzpicture}
\end{center}

\boldchange{We can make this systematic by using atomic rules to label worlds.  This may be done by taking a labeling of atomic formulas by worlds $p_w$ for each $w\in W$ and then using $\inference{p_w}{p_w}$ to ensure the atomic rules set does not collapse into any other world.  Because $\inference{p_w}{p_w}$ is tautologous, it will not allow anything new to be derived, and so we do not need to worry about it fulfilling the first role of atomic rules.}

Let $p_{(\cdot)}:W\rightarrow ATOM$ be an injective function from a set of worlds $W$ to the set of atomic propositions. Because $W$ will be finite, we can assume such a function exists. For ease, let us write $p_{(w)}$ as $p_w$. We can now define an intuitionistic proof-theoretic system for every finite intuitionistic Kripke model:

\begin{defn}
Given a finite intuitionistic Kripke model $\mathcal{M}=\langle\langle W,R\rangle, V\rangle$, we define $\mathfrak{S}_{\mathcal{M}}=\set{S_w\mid w\in W}$ as follows: $S_w=\set{\Bar{p}\mid p\in V(w)}\cup\set{\inference{p_s}{p_s}\mid Rsw}\cup\set{\inference{\bot}{p}\mid p \text{ atomic}}$. \boldchange{As with all proof-theoretic systems, $\subseteq$ is the analogue of accessibility relations.}
\end{defn}

\boldchange{Consider a particular atomic system on this interpretation. It will be made up of three parts. The first, $\set{\Bar{p}\mid p\in V(w)}$, ensures that it proves every atomic formula that the world forces. The second, $\set{\inference{\bot}{p}\mid p \text{ atomic}}$, ensures that the system is not minimal. The third, $\set{\inference{p_s}{p_s}\mid Rsw}$, encodes the accessibility relation of the Kripke model.

The following illustrates this method:}
\begin{center}
    \begin{tikzpicture}[modal]
        \node (f) [label=left:{$w_1$}] {$p$};
        \node (e) [label=left:{$w_2$},above=of f] {$p,q$};
        \node (w) [label=left:{$w_3$},above left=of e] {$p,q,r$};
        \node (v) [label=left:{$w_4$},above right=of e] {$p,q$};
        \begin{scope}[node distance=.3cm]
        \node (text) [below=of f] {(a)};
        \end{scope}
        \path[->] (f) edge (e);
        \path[->] (e) edge (v);
        \path[->] (e) edge (w);
    \end{tikzpicture}
    \begin{tikzpicture}[modal]
        \node (f) [] {$\set{\axiom{p},\inference{p_{w_1}}{p_{w_1}}}$};
        \node (e) [above=of f] {$\set{\axiom{p},\axiom{q},\inference{p_{w_1}}{p_{w_1}},\inference{p_{w_2}}{p_{w_2}}}$};
        \node (w) [above left=of e] {$\set{\axiom{p},\axiom{q},\axiom{r},\inference{p_{w_1}}{p_{w_1}},\inference{p_{w_2}}{p_{w_2}},\inference{p_{w_3}}{p_{w_3}}}$};
        \node (v) [above right=of e] {$\set{\axiom{p},\axiom{q},\inference{p_{w_1}}{p_{w_1}},\inference{p_{w_2}}{p_{w_2}},\inference{p_{w_4}}{p_{w_4}}}$};
        \begin{scope}[node distance=.3cm]
        \node (text) [below=of f] {(b)};
        \end{scope}
        \path[->] (f) edge (e);
        \path[->] (e) edge (v);
        \path[->] (e) edge (w);
    \end{tikzpicture}
\end{center}
\boldchange{First of all, note that the world associated with each atomic system is uniquely identified by the propositional letter $p_w$ which encodes it.  However, this alone would not ensure that the proof-theoretic system matched the Kripke model. In order to do that, each atomic system must also encode every world that accesses it: if $Rw_1w_2$, then $S_{w_2}$ contains the propositional letter $p_{w_1}$ encoding $w_1$.  This ensures that the subset relation $\subseteq$ on atomic systems matches the accessibility relation $R$ on worlds.}

We will now demonstrate that $\mathfrak{S}_{\mathcal{M}}$ exists, that it is an intuitionistic proof-theoretic system, and that it models the same formulas as $\mathcal{M}$.
\begin{lem}
For every finite intuitionistic Kripke model $\mathcal{M}$, the set $\mathfrak{S}_\mathcal{M}$ exists and is an intuitionistic proof-theoretic system.
\end{lem}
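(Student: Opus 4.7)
The plan is to verify each of the three claims (existence, being a proof-theoretic system, and intuitionisticness) in turn, each of which reduces to a direct inspection of the definition of $\mathfrak{S}_\mathcal{M}$.

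First, for existence, the only non-trivial ingredient in the definition is the labelling function $p_{(\cdot)}:W\rightarrow ATOM$. Since $W$ is finite and $ATOM$ is countably infinite, an injection from $W$ into $ATOM$ exists by elementary set theory; fix one such injection. Given this, each $S_w$ is defined as an explicit union of three subsets of $\mathbb{S}$, namely $\set{\axiom{p}\mid p\in V(w)}$, $\set{\inference{p_s}{p_s}\mid Rsw}$, and $\set{\inference{\bot}{p}\mid p\text{ atomic}}$. Each of these lies in $\powerset(\mathbb{S})$, so $S_w\in\powerset(\mathbb{S})$, and $\mathfrak{S}_\mathcal{M}=\set{S_w\mid w\in W}$ is then obtained as the image of $W$ under the assignment $w\mapsto S_w$, and so exists as a set.

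Second, to confirm that $\mathfrak{S}_\mathcal{M}$ is a proof-theoretic system, I need only note that by the previous paragraph $\mathfrak{S}_\mathcal{M}\subseteq\powerset(\mathbb{S})$, which is precisely the definition of a proof-theoretic system given in Section~2.

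Third, to confirm that $\mathfrak{S}_\mathcal{M}$ is intuitionistic, I need to check that every $S\in\mathfrak{S}_\mathcal{M}$ contains $\inference{\bot}{p}$ for every atomic $p$. But this is immediate, since the third component $\set{\inference{\bot}{p}\mid p\text{ atomic}}$ is explicitly included in every $S_w$. There is no real obstacle in this lemma; it serves essentially as a sanity check on the construction, and the only subtlety worth flagging is the use of finiteness of $W$ (together with the countability of $ATOM$) to secure the existence of the injection $p_{(\cdot)}$.
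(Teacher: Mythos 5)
Your proof is correct and follows essentially the same route as the paper's: finiteness of $W$ (against countably many atoms) secures the labelling, each $S_w$ is observed to be a subset of $\mathbb{S}$ so that $\mathfrak{S}_\mathcal{M}\subseteq\powerset(\mathbb{S})$, and the explicit inclusion of $\set{\inference{\bot}{p}\mid p\text{ atomic}}$ in every $S_w$ gives intuitionisticness. Your version is slightly more explicit about why the injection exists, but the argument is the same.
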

\begin{proof}
Because $\mathcal{M}$ is finite, we know that there exists a labelling $p_w$ for $w\in W$. It follows that $S_w\subseteq\set{\axiom{p},\inference{p}{p},\inference{\bot}{p}\mid p \text{ atomic}}\subseteq \mathbb{S}$, and therefore $\mathfrak{S}_\mathcal{M}$ is a subset of $\powerset(\mathbb{S})$. Because $\set{\inference{\bot}{p}\mid p \text{ atomic}}\subseteq S_w$, it follows that the system is intuitionistic.
\end{proof}
\noindent The following lemmas demonstrate that $\mathfrak{S}_{\mathcal{M}}$ models the same formulas as $\mathcal{M}$.
\begin{lem}
For every finite intuitionistic Kripke model $\mathcal{M}$, if $w\neq w'$ then $S_w\neq S_{w'}$.
\end{lem}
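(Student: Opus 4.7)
My plan is to exploit the ``world-labelling'' part $\{\inference{p_s}{p_s}\mid Rsw\}$ of $S_w$, which is exactly what the labelling trick was introduced for in the first place. The two features of partial orders that matter here are reflexivity (which guarantees $\inference{p_w}{p_w}\in S_w$ for every $w$) and antisymmetry (which will be used to distinguish $w$ and $w'$).

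Suppose $w\neq w'$. By antisymmetry of $R$, we cannot have both $Rww'$ and $Rw'w$, so without loss of generality we may assume $\neg Rw'w$. By reflexivity, $Rw'w'$, and hence $\inference{p_{w'}}{p_{w'}}\in S_{w'}$. I would then argue that $\inference{p_{w'}}{p_{w'}}\notin S_w$ by a short case analysis on the three components of $S_w$: the axioms $\{\axiom{p}\mid p\in V(w)\}$ have no premises, the rules $\{\inference{\bot}{p}\mid p \text{ atomic}\}$ have $\bot$ rather than $p_{w'}$ as premise, and the labelling component $\{\inference{p_s}{p_s}\mid Rsw\}$ can contain $\inference{p_{w'}}{p_{w'}}$ only if there is some $s$ with $p_s=p_{w'}$ and $Rsw$; by injectivity of $p_{(\cdot)}$ this forces $s=w'$ and hence $Rw'w$, contradicting our assumption.

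The argument is essentially a bookkeeping exercise, with no real obstacle beyond being careful about the case split on the three components of $S_w$ and invoking injectivity to rule out an accidental occurrence of $\inference{p_{w'}}{p_{w'}}$ as a labelling rule indexed by some other world. The crucial design choice in the definition of $S_w$ — pairing the reflexivity and antisymmetry of $R$ with the injectivity of $p_{(\cdot)}$ — is precisely what makes this go through.
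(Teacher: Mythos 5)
Your proof is correct and uses the same essential ingredients as the paper's: the labelling rules $\inference{p_s}{p_s}$, reflexivity to get $\inference{p_{w'}}{p_{w'}}\in S_{w'}$, and antisymmetry to separate $w$ from $w'$. The paper runs the argument contrapositively (from $S_w=S_{w'}$ it derives $Rww'$ and $Rw'w$, hence $w=w'$), whereas you exhibit an explicit witness rule in $S_{w'}\setminus S_w$; your version is in fact slightly more careful, since the case analysis on the three components of $S_w$ and the appeal to injectivity of $p_{(\cdot)}$ are steps the paper leaves implicit.
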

\begin{proof}
Assume $w\neq w'$ and $S_w=S_{w'}$. It follows that $\set{\inference{p_{w^*}}{p_{w^*}}\mid Rw^*w}=\set{\inference{p_{w^*}}{p_{w^*}}\mid Rw^*w'}$ and because $Rww$ and $Rw'w'$, it follows that $Rww'$ and $Rw'w$. By antisymmetry thus $w=w'$, which is a contradiction.
\end{proof}

\begin{lem}\label{lem:findS}
For every finite $\mathcal{M}$ and $w,w'\in W$ such that $Rww'$, it follows that $S_w\subseteq S_{w'}$.
\end{lem}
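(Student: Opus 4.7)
The plan is to unpack $S_w$ into its three defining components and show each is a subset of the corresponding component of $S_{w'}$. Recall that $S_w = \{\bar{p} \mid p \in V(w)\} \cup \{\inference{p_s}{p_s} \mid Rsw\} \cup \{\inference{\bot}{p} \mid p \text{ atomic}\}$, with the analogous decomposition for $S_{w'}$. So the argument reduces to three inclusions, each keyed to a different ingredient of the Kripke semantics.

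First, I would handle the axioms coming from $V$. The inclusion $\{\bar{p} \mid p \in V(w)\} \subseteq \{\bar{p} \mid p \in V(w')\}$ follows directly from the monotonicity of the valuation function $V$ with respect to $R$: since $Rww'$, we have $V(w) \subseteq V(w')$, and any $\bar{p}$ arising from $V(w)$ arises equally from $V(w')$.

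Second, I would tackle the ``labelling'' rules $\inference{p_s}{p_s}$. Suppose $\inference{p_s}{p_s}$ is in $S_w$, so $Rsw$. Combined with the hypothesis $Rww'$, transitivity of $R$ (which is part of the partial-order assumption on the accessibility relation) yields $Rsw'$, and hence $\inference{p_s}{p_s} \in S_{w'}$. This is the step where the crucial order-theoretic property of $R$ is used.

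Third, the ex falso rules $\{\inference{\bot}{p} \mid p \text{ atomic}\}$ are present in every $S_x$ by definition, so this inclusion is immediate. Combining the three yields $S_w \subseteq S_{w'}$, completing the proof. I do not anticipate any real obstacle here: the lemma essentially records that the three conjuncts in the definition of $S_w$ were chosen precisely so that $\subseteq$ on atomic systems tracks $R$ on worlds, and the verification is a direct bookkeeping argument using only monotonicity of $V$ and transitivity of $R$.
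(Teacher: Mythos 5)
Your proof is correct and follows essentially the same route as the paper's: the paper also argues by cases on the three kinds of rules in $S_w$, using monotonicity of $V$ for the axioms, the intuitionistic condition for the $\inference{\bot}{p}$ rules, and transitivity of $R$ for the labelling rules. The only cosmetic difference is that you phrase it as three componentwise inclusions while the paper picks an arbitrary $\tau\in S_w$ and case-splits.
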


\begin{proof}
Let $\tau\in S_w$ be a rule, then either $\tau=\axiom{p}$ for some $p\in V(w)$, $\tau=\inference{\bot}{p}$ for some atomic $p$, or $\tau=\inference{p_{w^*}}{p_{w^*}}$ for some $w^*\in W$ such that $Rw^*w$. In the first case, since $V$ is monotonic and $Rww'$, it follows that $p\in V(w')$ and therefore $\tau=\axiom{p}\in S_{w'}$. In the second case, $\tau=\inference{\bot}{p}\in S_{w'}$ because the system is intuitionistic. In the third case, since $R$ is transitive, $Rw^*w'$, and therefore $\tau=\inference{p_{w^*}}{p_{w^*}}\in S_{w'}$. So $\tau\in S_{w'}$.
\end{proof}

\begin{lem}\label{lem:findw}
For every finite $\mathcal{M}$ and $S_w,S\in\mathfrak{S}_\mathcal{M}$ such that $S_w\subseteq S$, there is a $w'\in W$ such that $S=S_{w'}$ and $Rww'$.
\end{lem}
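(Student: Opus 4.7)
The plan is to exploit the self-referential labels $\inference{p_w}{p_w}$, which were designed precisely to encode the accessibility relation into the subset structure. Since $S\in\mathfrak{S}_\mathcal{M}$, by definition there exists some $w'\in W$ with $S=S_{w'}$; so the existence part is immediate from how $\mathfrak{S}_\mathcal{M}$ was defined, and the entire content of the lemma lies in showing $Rww'$.

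To get $Rww'$, I would argue as follows. By reflexivity of $R$ we have $Rww$, and therefore $\inference{p_w}{p_w}\in S_w$ by the clause $\{\inference{p_s}{p_s}\mid Rsw\}$ in the definition of $S_w$. The assumption $S_w\subseteq S_{w'}$ then gives $\inference{p_w}{p_w}\in S_{w'}$. Now I inspect the three components of $S_{w'}$: the axioms $\axiom{p}$ are not of this shape, the rules $\inference{\bot}{p}$ have $\bot$ in the premise and so are also excluded, and therefore $\inference{p_w}{p_w}$ must lie in $\{\inference{p_s}{p_s}\mid Rsw'\}$. Hence there exists $s\in W$ with $Rsw'$ and $p_s=p_w$. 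Injectivity of the labeling $p_{(\cdot)}$ forces $s=w$, giving $Rww'$ as required.

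There is no real obstacle here; the argument is essentially a bookkeeping check that the encoding works as advertised. The only step that requires a moment of care is ensuring that the matching premise-conclusion pair $p_w=p_w$ can only come from the accessibility component of $S_{w'}$ (and not, say, be confused with an axiom or a $\bot$-elimination rule), and that injectivity of $p_{(\cdot)}$ lets us recover the world $w$ uniquely from its label. Both are immediate from the definition of an atomic rule and from the assumption that $p_{(\cdot)}$ is injective on the finite set $W$.
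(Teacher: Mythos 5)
Your proof is correct and follows essentially the same route as the paper's: identify $w'$ from the definition of $\mathfrak{S}_\mathcal{M}$, observe that $\inference{p_w}{p_w}\in S_w\subseteq S_{w'}$, and conclude that it must belong to the component $\set{\inference{p_{w^*}}{p_{w^*}}\mid Rw^*w'}$, whence $Rww'$. You merely spell out details the paper leaves implicit (reflexivity to get $\inference{p_w}{p_w}\in S_w$, ruling out the other components, and injectivity of the labeling), which is a welcome tightening rather than a different argument.
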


\begin{proof}
If $S=S_w$, we are done so let us assume not. By the definition of $\mathfrak{S}_\mathcal{M}$, we know that $S=S_{w'}$ for some $w'\in W$ and because $\inference{p_w}{p_w}\in S_w\subseteq S_{w'}$, it follows that $\inference{p_w}{p_w}\in\set{\inference{p_{w^*}}{p_{w^*}}\mid Rw^*w'}$ and therefore $Rww'$.
\end{proof}

\begin{thm}\label{thm:kripketopts}
Given finite $\mathcal{M}=\langle\langle W,R\rangle, V\rangle$, it follows that for every $w\in W$:
$$\mathcal{M},w\Vdash\varphi\Leftrightarrow\mathfrak{S}_{\mathcal{M}},S_w\vDash\varphi.$$
\end{thm}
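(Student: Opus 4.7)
My plan is to prove this by straightforward induction on the complexity of $\varphi$. The base cases handle atomic propositions and $\bot$, the conjunction and disjunction cases fall out of the induction hypothesis because the clauses in Definitions~\ref{def:piecha_logic} and~\ref{def:kripke_logic} are identical in form, and the implication case is handled using Lemmas~\ref{lem:findS} and~\ref{lem:findw}.

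For the atomic base case $\varphi = p$, I need to show $p \in V(w) \Leftrightarrow S_w \vdash p$. The forward direction is immediate since $\bar{p} \in S_w$ whenever $p \in V(w)$. For the converse, I would perform a sub-induction on the structure of an $S_w$-derivation. The axioms of $S_w$ are exactly $\{\bar{p} \mid p \in V(w)\}$, and the remaining rules are of two kinds: $\inference{\bot}{q}$ for atomic $q$, and tautologous rules $\inference{p_{w^*}}{p_{w^*}}$ for $Rw^*w$. A brief analysis shows that $\bot$ is underivable in $S_w$ (there is no axiom $\bar{\bot}$ and no inference concluding $\bot$), so the ex falso rules contribute nothing, and tautologous rules can only conclude what is already derivable. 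Hence the derivable atoms coincide with the axioms, i.e.\ with $V(w)$. The $\bot$ base case is a corollary: $\mathcal{M}, w \nVdash \bot$ by definition, and $S_w \nvdash \bot$ by the same underivability argument.

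For the implication case $\varphi = \psi \rightarrow \chi$, I would establish a correspondence between the Kripke quantifier over $R$-successors and the proof-theoretic quantifier over supersets in $\mathfrak{S}_\mathcal{M}$. Lemma~\ref{lem:findS} gives $Rww' \Rightarrow S_w \subseteq S_{w'}$, so every $R$-successor yields a superset; conversely, Lemma~\ref{lem:findw} tells us that every $S \in \mathfrak{S}_\mathcal{M}$ with $S_w \subseteq S$ is of the form $S_{w'}$ for some $w'$ with $Rww'$. Combining these with the induction hypothesis applied to $\psi$ and $\chi$ at each $S_{w'}$, the two universal quantifications become interchangeable and the biconditional follows.

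The main obstacle is the atomic base case: while the lemmas give the structural correspondence needed for the $\rightarrow$-case, one must separately verify that the tautologous labeling rules $\inference{p_{w^*}}{p_{w^*}}$ do not spuriously enlarge the set of derivable atoms beyond $V(w)$. This is the content of the sketched sub-induction on derivations, and it relies essentially on the observation that $\bot$ is unreachable in $S_w$ (so ex falso is inert) and that the labeling rules are circular (so they cannot introduce new atoms). Once this is settled, the rest of the argument is a routine induction.
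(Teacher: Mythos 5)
Your proof is correct and its substance coincides with the paper's: both arguments reduce the theorem to (i) the structural correspondence between $R$-successors and supersets in $\mathfrak{S}_\mathcal{M}$, supplied by Lemmas~\ref{lem:findS} and~\ref{lem:findw}, and (ii) the atomic case $p\in V(w)\Leftrightarrow S_w\vdash p$, whose only nontrivial half is showing that the non-axiom rules of $S_w$ cannot derive any atom beyond $V(w)$ because $\bot$ is underivable (so ex falso is inert) and the labelling rules $\inference{p_{w^*}}{p_{w^*}}$ are circular. The packaging differs: the paper cites the result that intuitionistic proof-theoretic systems are Kripke models and then invokes the Bisimulation Theorem to dispatch the inductive cases, whereas you carry out the induction on $\varphi$ explicitly; your route is more self-contained and makes visible exactly where the two lemmas are used (only in the $\rightarrow$-case), at the cost of redoing work the bisimulation machinery handles wholesale. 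One small imprecision: since $\bot$ is treated as an atomic proposition, $S_w$ \emph{does} contain an inference concluding $\bot$, namely $\inference{\bot}{\bot}$ (an instance of the ex falso schema), so your parenthetical ``no inference concluding $\bot$'' is not literally true; the underivability of $\bot$ instead follows, as in the paper, from the fact that this is the \emph{only} such rule and it already requires $\bot$ as a premise, so an induction on derivation height closes the gap. Your sub-induction on derivations would catch this, so it is a wording issue rather than a genuine gap.
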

\begin{proof}
Intuitionistic proof-theoretic systems can be treated as Kripke models (\cite{Piecha2016-gt}). This means we can use the Bisimulation Theorem. The result thus follows via Lemma~\ref{lem:findw} and Lemma~\ref{lem:findS} if it can be shown that $\mathcal{M},w\Vdash p\Leftrightarrow\mathfrak{S}_{\mathcal{M}},S_w\vDash p.$
Note that

$$\mathcal{M},w\Vdash p\underset{def.\Vdash}{\Leftrightarrow} p\in V(w)\underset{def.\mathfrak{S}_\mathcal{M}}{\Leftrightarrow} \Bar{p}\in S_w\underline{\Rightarrow} S_w\vdash p\underset{def.\vDash}{\Leftrightarrow}\mathfrak{S}_{\mathcal{M}},S_w\vDash p.$$

\noindent What is left to show is that $S_w\vdash p\Rightarrow \axiom{p}\in S_w$. Assume $S_w\vdash p$ but $\axiom{p}\notin S_w$. It follows that there must be a closed proof of $p$, say $\mathcal{D}$, such that the axioms of $\mathcal{D}$ are not $\axiom{p}$ but the conclusion is $p$. This requires an inference rule $\inference{q_1\cdots q_n}{p}$ where $p$ does not occur among the $q_1,\dots, q_n$. The only candidate for this is $\inference{\bot}{p}$. The use of this rule would require a proof of $\bot$. Because $\mathcal{M}$ does not model $\bot$, the only rule that can contain $\bot$ in the conclusion is therefore $\inference{\bot}{\bot}$. But no proof of $\bot$ can be constructed from this rule. Therefore $\mathcal{D}$ cannot be a proof of $p$ not containing $\axiom{p}$.
\end{proof}

%It is perhaps worth noting that when defined $\mathcal{M}_{\mathfrak{S}_\mathcal{M}}=\mathcal{M}$ while $\mathfrak{S}_{\mathcal{M}_{\mathfrak{S}}}$ cannot be assumed to equal $\mathfrak{S}$ because many rules may have been removed to be replaced with axioms and trivial rules.
We can now prove our key result:
\begin{thm}
$\varphi$ is a generalised proof-theoretically valid formula $\Leftrightarrow$ $\varphi$ is an intuitionistic validity.
\end{thm}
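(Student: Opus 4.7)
The plan is to split the biconditional into its two directions, with the completeness direction doing essentially all of the work via the construction developed in this section.

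For the soundness direction $(\Leftarrow)$, suppose $\varphi$ is an intuitionistic validity. The paper has already noted that intuitionistic logic is sound with respect to any intuitionistic proof-theoretic system (this follows from the fact that an intuitionistic $\mathfrak{S}$ behaves, with respect to Definition~\ref{def:piecha_logic}, like a Kripke model with $\subseteq$ as accessibility and where $\bot$ is not forced at any system, since $\set{\inference{\bot}{p}\mid p\text{ atomic}}\subseteq S$ forces $S\vdash\bot$ to be equivalent to triviality). Hence for every intuitionistic $\mathfrak{S}$ and every $S\in\mathfrak{S}$, $\mathfrak{S},S\vDash\varphi$, so $\mathfrak{S}\vDash\varphi$, which is exactly what is required for $\varphi\in GPTV$.

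For the completeness direction $(\Rightarrow)$, I would argue by contraposition. Suppose $\varphi$ is not an intuitionistic validity. By Theorem~\ref{thm:finitecomplete}, there is a finite intuitionistic Kripke model $\mathcal{M}=\langle\langle W,R\rangle,V\rangle$ and a world $w\in W$ such that $\mathcal{M},w\nVdash\varphi$. The preceding lemmas guarantee that $\mathfrak{S}_\mathcal{M}$ exists and is an intuitionistic proof-theoretic system, and Theorem~\ref{thm:kripketopts} then gives $\mathfrak{S}_\mathcal{M},S_w\nvDash\varphi$. Since $S_w\in\mathfrak{S}_\mathcal{M}$, this witnesses $\mathfrak{S}_\mathcal{M}\nvDash\varphi$, so $\varphi\notin GPTV$.

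The main obstacle has already been discharged: it was the construction of $\mathfrak{S}_\mathcal{M}$ and the verification in Theorem~\ref{thm:kripketopts} that forcing in $\mathcal{M}$ matches proof-theoretic validity in $\mathfrak{S}_\mathcal{M}$. Given that machinery together with Troelstra and van Dalen's finite-model completeness, the final theorem is essentially a short corollary, so the proof should be only a few lines.
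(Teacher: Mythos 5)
Your proof is correct and follows essentially the same route as the paper: soundness via the known equivalence of intuitionistic proof-theoretic systems with Kripke models, and completeness via Theorem~\ref{thm:finitecomplete} together with Theorem~\ref{thm:kripketopts} (the paper argues by contradiction where you use contraposition, which is an immaterial difference). The only cosmetic divergence is that the paper justifies the soundness direction by citing the equivalence result of Piecha and Schroeder-Heister directly rather than re-sketching why $\bot$ behaves correctly in intuitionistic systems.
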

\begin{proof}
First, assume $\varphi$ is an intuitionistic validity. Then every intuitionistic Kripke model forces $\varphi$ and every intuitionistic proof-theoretic system is equivalent to an intuitionistic Kripke model (\cite{Piecha2016-gt}). For every intuitionistic proof-theoretic system $\mathfrak{S}$, it thus follows that $\mathfrak{S}\vDash\varphi$.

Next, assume that for every intuitionistic proof-theoretic system $\mathfrak{S}$, it follows that $\mathfrak{S}\vDash\varphi$, while for a contradiction, assume that $\varphi$ is not an intuitionistic validity. Then there is a finite intuitionistic Kripke model $\mathcal{M}$ (by Theorem~\ref{thm:finitecomplete}) that does not model $\varphi$. But by Theorem~\ref{thm:kripketopts}, it follows that $\mathfrak{S}_\mathcal{M}\nvDash\varphi$, which contradicts the initial assumption.
\end{proof}

\boldchange{Piecha and Schroeder-Heister came up with very plausible restrictions on any proof-theoretic validity notion. One condition they place is called export, which states that a atomic system can be coded as a set of formulas. What changes here is that validity is now defined relative to all proof-theoretic systems, no longer being relative to all atomic systems.  This would require the following generalisation of export:
\begin{multline}
    \tag{Export}\text{For every proof-theoretic system and atomic system } \mathfrak{S}, S\\ \text{ there is a set of formulas }\Gamma \text{ such that } \mathfrak{S},S\vDash \varphi \Leftrightarrow (\vDash\Gamma\Rightarrow\vDash\varphi) 
\end{multline}
\noindent But, unlike atomic systems, proof-theoretic systems are too complex to be coded by a single set of formulas.

As \textcite[244-5]{Piecha2019} note, this condition is also violated by Goldfarb's (\citeyear{Goldfarb2016-tk}) proof-theoretic validity notion.  Goldfarb provides a system that becomes intuitionistic when closed under substitution. His approach is very similar to the one adopted in this paper, although he uses a different collection of proof-theoretic systems.  In particular, he can be understood as taking every system of the form $\set{S\subseteq \mathbb{S}\mid G\subseteq S}$, where $G$ is some atomic system. His set of proof-theoretic systems is too restrictive for an analogue of Theorem~\ref{thm:kripketopts}, which allows an interpretation of all finite intuitionisitic Kripke models.}

\section{Conclusion}
While \textcite{Piecha2019} demonstrated that Prawitz's conjecture is false for the definition of proof-theoretic validity given by Prawitz in the 1970s, we have demonstrated how a small modification can produce a generalised notion of proof-theoretic validity for which Prawitz's conjecture is true. \boldchange{Moreover, some straightforwards adaptations of our definitions may be used to prove similar results for minimal logic, since our proof generates intuitionistic proof-theoretic systems from intuitionistic Kripke models and could also be used to obtain minimal proof-theoretic systems from minimal Kripke models. (See \cite{De_Jongh2015-ic} and \cite{Colacito2016-pd} for the definition of minimal Kripke models, the finite model property for minimal logic, and other modification that would be needed.)}

Our generalisation may seem motivated by the desired technical result rather than by the underlying philosophy, but we can provide solid motivation for the modification. The following two points are to be given in its favour: 
\begin{itemize}
    \item[] First, it is natural to think of an atomic system $S$ as a possible inferentialist definition for atomic propositions. Once we think of them this way, it is natural to think of proof-theoretic systems as providing us with different ways of defining atomic propositions. But the proof-theoretic system gives more information: it tells us, for instance, whether two ways of defining the atomic propositions are compatible or whether a particular definition can be extended by additional rules. Given that proof-theoretic validity is supposed to capture what is logically valid, we should consider not only every way the atomic propositions might be defined, but also all the different ways definitions might be extendable or incompatible. To do this, we need generalised proof-theoretic validity.
    \item[] Second, an examination of why it is that all proof-theoretic systems are superintuitionistic makes it clear that information is encoded by the atomic rules. Still, proof-theoretic validity is about logical connectives, not about atomic propositions. The natural response to the treatment of the atomic propositions that encode information is to generalise the treatment, as we have done.
\end{itemize} 
\printbibliography
\end{document}